\newcommand{\beq}{\begin{equation}}
\newcommand{\beqn}{\begin{equation*}}
\newcommand{\eeq}{\end{equation}}
\newcommand{\eeqn}{\end{equation*}}
\newcommand{\R}{\mathbb{R}}
\newcommand{\C}{\mathbb{C}}
\newcommand{\Q}{\mathbb{Q}}
\newcommand{\rk}{{\rm rk}}
\newcommand{\sone}{S^1}
\newcommand{\z}{\mathbb{Z}}
\newcommand{\Z}{\mathbb{Z}}
\newtheorem{theorem}{Theorem}
\newtheorem{lemma}{Lemma}
\newtheorem{corollary}{Corollary}
\newtheorem{proposition}{Proposition}
\newtheorem{remark}{Remark}
\title[The second closed geodesic]{The
	second closed geodesic,
		the fundamental group, and generic
		Finsler metrics}
\author{Hans-Bert~Rademacher}
\address{Mathematisches Institut, Universit{\"a}t Leipzig,
	04081 Leipzig, Germany}
\email{rademacher@math.uni-leipzig.de}
\author{Iskander~A.~Taimanov}
\thanks{The second author (I.A.T.) was supported by Russian Science Foundation (grant 19-11-00044).}
\address{Novosibirsk State University,
	630090 Novosibirsk, Russia, and Sobolev Institute of Mathematics,
	630090 Novosibirsk}
\email{taimanov@math.nsc.ru}
\date{2020-11-09}
\begin{document}

\begin{abstract}
For compact manifolds with infinite
fundamental group we present sufficient
topological or metric conditions ensuring
the existence of two geometrically
distinct closed geodesics.
We also show how results about generic
Riemannian metrics can be carried over to
Finsler metrics.

\end{abstract}
\subjclass[2010]{53C22, 58E10}
\keywords{closed geodesic, fundamental group, generic metric, Finsler metric}

\maketitle
%%%%%%%%%%%%%%%%%%%%%%%%%%%%%%%%%%%%%%%%%%%%%%%%%%%%%%%%%%%%
\section{Introduction}
%%%%%%%%%%%%%%%%%%%%%%%%%%%%%%%%%%%%%%%%%%%%%%%%%%%%%%%%%%%%
In this paper we are interested in existence results
for closed geodesics of a Riemannian or Finsler metric
on a compact manifold with infinite fundamental group.
The shortest closed curve in a non-trivial
free homotopy class
is a closed geodesic. Hence on a manifold with non-trivial
fundamental group there always exists a
non-contractible closed geodesic.
It is not difficult to see that
there are infinitely many geometrically distinct
and non-contractible
closed geodesics if the first Betti number
satisfies
$b_1(M)=\rk H_1(M;\z)\ge 2.$
The existence of infinitely many geometrically distinct
closed geodesics in case of manifolds with
fundamental group
$\pi_1(M)\cong \z$ was shown by Bangert and Hingston,
cf.~\cite{BH}, and for an infinite solvable fundamental group
in~\cite{T1993}.
In \cite{RT} the authors present results about the existence of
infinitely many
non-contractible and
geometrically distinct closed
geodesics on connected sums $M=M_1\# M_2.$

We investigate which topological assumptions
imply the existence of at least two geometrically
distinct and non-contractible
closed geodesics for compact manifolds with
infinite fundamental group.
%%%%%%%%%%%%%%%%%%%%%%%%%%%%%%%%%%%%%%%%%%%%%%%%%%%%%%%%%%%%
\begin{theorem}
	\label{thm:riem}
Let $M$ be a compact manifold with infinite fundamental
group $\pi_1(M)$ without torsion elements
whose first homology group
$H_1(M;\z)$ is neither trivial (then the fundamental
group is perfect) nor cyclic of finite order
(i.e. $H_1(M;\z)\cong \z/(m\z)$ for some $m \ge 2$).
Then any Riemannian or reversible Finsler metric
carries at least two geometrically distinct
and non-contractible
closed geodesics. 	
\end{theorem}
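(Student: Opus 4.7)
The plan is to exhibit two non-identity elements $\alpha,\beta\in\pi_1(M)$ with the property that length-minimizing closed geodesics in their free homotopy classes cannot be iterates of a common primitive. The key algebraic reduction is as follows: if the minimizers $c_\alpha,c_\beta$ shared a primitive closed geodesic $\gamma$, say $c_\alpha=\gamma^j$ and $c_\beta=\gamma^k$ for non-zero integers $j,k$, then $\alpha$ would be conjugate to $[\gamma]^j$ and $\beta$ to $[\gamma]^k$ in $\pi_1(M)$. Abelianizing, with bars denoting images in $H_1(M;\z)$, we would obtain $\bar\alpha=j\,g$ and $\bar\beta=k\,g$ with $g:=\bar{[\gamma]}$. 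It therefore suffices to choose $\alpha,\beta\ne e$ for which no such $g\in H_1(M;\z)$ and no such non-zero integers $j,k$ exist.

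The choice of $\alpha,\beta$ splits according to the structure of $H_1(M;\z)$. If $\pi_1(M)\cong\z$, the conclusion follows directly from Bangert--Hingston~\cite{BH}. Otherwise, the assumption that $H_1(M;\z)$ is neither trivial nor cyclic of finite order leaves two possibilities: either $H_1(M;\z)$ is non-cyclic, or $H_1(M;\z)\cong\z$ while $\pi_1(M)\ne\z$. In the non-cyclic case we pick $\bar\alpha,\bar\beta\ne 0$ so that no cyclic subgroup of $H_1(M;\z)$ contains both with non-zero coefficients: two elements linearly independent modulo torsion when $\rk H_1(M;\z)\ge 2$, a free generator together with a non-trivial torsion element when $H_1(M;\z)\cong\z\oplus T$ with $T\ne 0$, or two independent elements of a common prime order $p$ when $H_1(M;\z)$ is finite non-cyclic. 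In the remaining case we exploit the non-triviality of the commutator subgroup $[\pi_1(M),\pi_1(M)]$, letting $\alpha$ lift a generator of $\z$ and picking any $\beta\ne e$ in the commutator subgroup; then $\bar\beta=0$ in the torsion-free group $\z$ forces $g=0$ for any candidate $g$ with $kg=0$ and $k\ne 0$, which then contradicts $\bar\alpha=jg\ne 0$.

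Because $\alpha,\beta\ne e$, their conjugacy classes $[\alpha],[\beta]$ are non-trivial free homotopy classes, and the classical length-minimization argument on a compact manifold with Riemannian or reversible Finsler metric --- taking a minimizing sequence of constant-speed loops in the class and extracting a $C^0$-convergent subsequence by Arzel\`a--Ascoli, whose limit is a non-constant closed geodesic --- yields non-contractible closed geodesics $c_\alpha$ and $c_\beta$ in these classes. Geometric distinctness follows from the algebraic reduction of the first paragraph. The subtlest situation is $H_1(M;\z)\cong\z$ with $\pi_1(M)\ne\z$, where the two homotopy classes cannot be separated at the level of $H_1$; the argument then hinges on the non-triviality of the commutator subgroup together with the torsion-freeness of $\z$ to rule out a common primitive.
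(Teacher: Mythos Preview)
Your argument is correct, and it proceeds along the same abelianization idea as the paper, but the organization is genuinely different. The paper argues by contradiction: assuming a single prime closed geodesic $c$ in the free homotopy class of some $a\in\pi_1(M)$, every element of $\pi_1(M)$ must be conjugate to a power of $a$; it then invokes an abstract group-theoretic lemma (Lemma~\ref{lem:group-anosov}) which, by a commutator manipulation, shows that such a group (if not $\z$) has \emph{finite} cyclic abelianization, contradicting the hypothesis on $H_1(M;\z)$. You instead construct the two free homotopy classes directly by a case analysis on the structure of $H_1(M;\z)$. Both routes handle $\pi_1(M)\cong\z$ via Bangert--Hingston. What your approach buys is explicitness: you actually name the two classes $[\alpha],[\beta]$ whose minimizers must be distinct, and in the delicate case $H_1\cong\z$, $\pi_1\not\cong\z$ your argument (take $\beta$ a nontrivial commutator and use torsion-freeness of $\z$) is arguably more transparent than the paper's Lemma~\ref{lem:group-anosov}. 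What the paper's approach buys is economy: one clean algebraic lemma replaces all of your casework, and that lemma is of independent interest (it isolates exactly the group-theoretic obstruction, cf.\ Remark~\ref{rem:kourovka}).
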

%%%%%%%%%%%%%%%%%%%%%%%%%%%%%%%%%%%%%%%%%%%%%%%%%%%%%%%%%%%%
If the fundamental group does not satisfy the assumptions of
Lemma~\ref{lem:group-anosov}, i.e.
if the group is not isomorphic
to $\z$ and
if there is no element
$a$ of infinite order such that any other element is
conjugate to a power of $a$
then it is clear that
there are two geometrically distinct closed geodesics,
cf. \cite[(3.15)]{Ba}.
% and \cite[Cor.1.4. (iii)]{AGKM}.
It is not known whether a
finitely presented group with this property
exists, cf.
Remark~\ref{rem:kourovka}.
In Theorem~\ref{thm:riem} we give
sufficient
conditions for the first homology group of the manifold
ensuring the existence
of a second non-contractible closed geodesic.

We also present an existence result for
non-reversible Finsler metrics. For Riemannian metrics
the \emph{bumpy metrics theorem} due to Abraham~\cite{Ab}
resp. Anosov~\cite{An} and perturbation arguments due
to Klingenberg and Takens~\cite{KT} can be used to prove
that for a $C^4$-generic metric either all closed geodesics
are \emph{hyperbolic} or there is a non-hyperbolic closed
geodesic of \emph{twist type}. In the second case the
Birkhoff-Lewis fixed point theorem implies the existence
of infinitely many geometrically distinct closed geodesics,
cf. \cite[Thm.3.3.10]{Kl} and \cite{Mo}.

For Finsler metrics corresponding genericity statements
hold as we explain in Section~\ref{sec:generic}.
For $r\ge 4$ a $C^r$-generic Finsler metric on a compact
manifold is bumpy, cf. Theorem~\ref{thm:bumpy}.
And for $r\ge 6$ a $C^r$-generic Finsler metric on
a compact manifold either all closed geodesics are hyperbolic, or there exists a non-hyperbolic
closed geodesic of twist type, cf.
Theorem~\ref{thm:twist}.
Hence for a statement about the existence of a second closed
geodesic at least for a generic non-reversible Finsler
metric it is sufficient to consider metrics all of whose
closed geodesics are hyperbolic.
%%%%%%%%%%%%%%%%%%%%%%%%%%%%%%%%%%%%%%%%%%%%%%%%%%%%%%%%%%%%
\begin{theorem}
	\label{thm:nonrev}
	Let $M$ be a compact manifold with non-trivial fundamental
	group $\pi_1(M)$
	endowed with a non-reversible
	Finsler metric.
	If at least one of two following conditions:
	
	a) the first Betti number of $M$ is positive: $b_1(M) = \dim H_1(M;\R) > 0$;
	
	b) all closed geodesics of this metric are hyperbolic,
	
	\noindent
	holds then there are two geometrically distinct
	and non-contractible closed geodesics.
\end{theorem}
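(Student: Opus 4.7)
I would argue both parts by contradiction, assuming that, up to geometric equivalence, there is exactly one non-contractible closed geodesic $c$ on $M$. Since $\pi_1(M)\ne 1$ and since the length-minimizer in every non-trivial free homotopy class is a non-contractible closed geodesic, the uniqueness assumption forces every such minimizer to be a positive iterate $c^m$, $m\ge 1$ (in a non-reversible Finsler metric the orientation-reverse of a geodesic is generally not a geodesic, so iterates are indexed only by $m\ge 1$).

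\emph{Part (a).} Using $b_1(M)>0$, pick $\alpha\in\pi_1(M)$ whose image $[\alpha]\in H_1(M;\Z)$ has infinite order. The length-minimizers in the free homotopy classes of $\alpha$ and of $\alpha^{-1}$ are iterates $c^{k_+}$ and $c^{k_-}$ with $k_\pm\ge 1$, hence $[c]^{k_+}\sim\alpha$ and $[c]^{k_-}\sim\alpha^{-1}$ in $\pi_1(M)$. Passing to $H_1(M;\Z)$ gives $k_+[c]=[\alpha]$ and $k_-[c]=-[\alpha]$; summing, $(k_++k_-)[c]=0$, whence $(k_++k_-)[\alpha]=0$, contradicting the infinite order of $[\alpha]$.

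\emph{Part (b).} The uniqueness assumption gives, more generally, that every conjugacy class of $\pi_1(M)$ is represented by a positive power of $\alpha:=[c]$. Under (b) one may assume $b_1(M)=0$ (else part (a) applies) and, by the group-theoretic dichotomy recalled following Theorem~\ref{thm:riem}, that $\pi_1(M)$ contains an element of infinite order; then $\alpha$ itself must have infinite order. Hyperbolicity enters through the index-iteration formula: each iterate $c^m$ is a non-degenerate critical point of the energy functional on $\Lambda M$ with $\ind(c^m)=m\,\hat{\ind}(c)+O(1)$ and positive mean index $\hat{\ind}(c)>0$, so on each component $\Lambda^{[c]^k} M$ of the free loop space the number of critical orbits of a given Morse index is uniformly bounded. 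The strict Morse inequalities then give a uniform bound $\dim H_j(\Lambda^{[c]^k} M;\Q)\le C$ independent of $j$.

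The contradiction will come from an unbounded lower bound on some $\dim H_j(\Lambda^{[c]^k} M;\Q)$ coming from the topology of $M$. The main obstacle is to secure this in the restricted class of fundamental groups that survive the previous reductions; the plan is to pass to the cyclic (or universal) cover of $M$ adapted to the chosen infinite-order element, use the twisted-loop description of the non-contractible components of $\Lambda M$, and extract the required growth of Betti numbers from the Sullivan model of the cover, invoking the common-index jump theorem of Long--Zhu as a finer iteration tool in the rationally elliptic sub-case where the rational Betti numbers of the free loop space are themselves uniformly bounded.
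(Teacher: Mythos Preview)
Your argument for part (a) is correct and is essentially the paper's argument: the minimizers in the free homotopy classes of $\alpha$ and $\alpha^{-1}$ would both be positive iterates of $c$, and passing to $H_1(M;\Z)$ forces $[\alpha]$ to be torsion.

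Part (b), however, has a genuine gap. Your key assertion that the hyperbolic geodesic $c$ has positive mean index $\hat{\ind}(c)>0$ is false in exactly the situation at hand. The prime geodesic $c$ is the length-minimizer in its free homotopy class, so $\ind(c)=0$; for a hyperbolic geodesic the Bott index function is constant on the unit circle, hence $\ind(c^m)=m\cdot\ind(c)=0$ for all $m$ and $\hat{\ind}(c)=0$. Thus your index-growth mechanism never gets off the ground, and the Morse-theoretic bound you want does not follow. The remainder of your sketch (Sullivan models of covers, Long--Zhu common index jump, the rationally elliptic sub-case) is not an argument but a list of tools, and none of them produces the required unbounded homology in a fixed component $\Lambda^{[c]^k}M$ under the very restrictive group-theoretic hypothesis that every conjugacy class is a positive power of $\alpha$.

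The paper's proof of (b) proceeds along completely different, essentially group-theoretic lines. Hyperbolicity is used only through the Ballmann--Thorbergsson--Ziller result (Proposition~\ref{pro:btz}): if two distinct positive powers of $h=[c]$ are conjugate, one already gets infinitely many closed geodesics. One first disposes of the case that $h$ is torsion by this proposition. Then, since the minimizer in $[h^{-1}]$ must be some $c^k$, $k\ge 1$, one has $h^{-1}=ah^ka^{-1}$, hence $h=a^2h^{k^2}a^{-2}$; if $k\ge 2$ the proposition again applies. So $k=1$, meaning $a^2$ commutes with $h$. If $\langle h,a^2\rangle$ is not cyclic, Proposition~\ref{oldsmj} (non-cyclic centralizer) gives a second geodesic. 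If it is cyclic then, since $h$ is primitive, $a^2=h^q$ for some $q\ge 1$, and conjugating by $a$ yields $h^{-q}=h^q$, contradicting the infinite order of $h$. No Morse index computation beyond these black boxes is needed.
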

%%%%%%%%%%%%%%%%%%%%%%%%%%%%%%%%%%%%%%%%%%%%%%%%%%%%%%%%%%%%
For simply-connected and compact manifolds with a bumpy
and non-reversible Finsler metric
there are at least two geometrically distinct
closed geodesics. This was shown by
Duan, Long and Wang in~\cite[Cor.1.2]{DLW}.
It also follows from existence results for
closed orbits of non-degenerate Reeb flows
obtained by Abreu, Gutt, Kang \&
Macarini~\cite[Cor.1.14]{AGKM}.

The structure of the paper is as follows:
After explaining the setting in Section~\ref{sec:setting}
and discussing generic properties of Finsler metrics
in Section~\ref{sec:generic} we prove
Theorem~\ref{thm:riem} in Section~\ref{sec:riem}.
Finally we prove Theorem~\ref{thm:nonrev} in
the last section.
%%%%%%%%%%%%%%%%%%%%%%%%%%%%%%%%%%%%%%%%%%%%%%%%%%%%%%%%%%%%
\section{Preliminary facts}
\label{sec:setting}

\subsection{Functional spaces}
Let $M^n$ be a
compact Riemannian or a Finsler manifold.

We denote by
$\Lambda(M^n)=H^1(S^1,M)$ the space of $H^1$-maps
$$
\gamma: [0,1] \to M^n, \ \ \ \gamma (0)=\gamma(1),
$$
of a circle $S^1 = \R/\Z$ into $M^n$ and by
$\Omega_x (M^n)$ the subspace of
$\Lambda(M^n)$ formed by loops starting and ending at $\gamma(0)=\gamma(1) =x \in M^n$.

Let $\Pi^+(M^n)$ and $\Pi(M^n)$ be the quotients of $\Lambda(M^n)$ with respect to the $SO(2) (=S^1)$-action:
$$
\varphi \cdot \gamma(t) = \gamma(t+\varphi), \ \ \ \varphi \in S^1 = \R/\Z,
$$
and the $O(2)$ action generated by $SO(2)$ and the inversion of the parameter:
$$
\gamma^{-1}(t) = \gamma(-t).
$$

Let $g: \Lambda M \to \Lambda M$
{map a curve to }{
be the map which corresponds to a curve} the same line
parametrized proportionally to the arc-length, such that $( g\cdot \gamma)(0) = \gamma(0)$.
We put
$$
L(M) = g(\Lambda(M)),
$$
and denote by $P^+(M)$ and $P(M^n)$ the following quotient-spaces:
$$
P^+(M) = L(M)/SO(2), \ \ \ \ \  P(M)= L(M)/O(2).
$$
It is known that these spaces are deformation retracts of $\Lambda(M)$, $\Pi^+(M)$ and $\Pi(M)$, respectively.
We recall that the action of $SO(2)$ is not free due to the iterates of
prime curves.

There is the procedure which replaces $L(M)/SO(2)$ by the quotient of a free action of $SO(2)$.
We recall it in the full generality. Let $X$ be a topological space
on which the group $G$ acts continuously.
The homotopy quotient $X_G$ of the $G$-space $X$ is the quotient of the product
$X \times EG$ with respect to the diagonal action of $G$. Here
$$
EG \stackrel{G}{\longrightarrow} BG
$$
is the universal $G$-bundle. By definition, $EG$ is contractible.
The $G$-equivariant cohomology are defined as
$$
H^\ast_G(X) = H^\ast(X_G).
$$
For $G=SO(2)$ we have $BG = \C P^\infty$, and, therefore, the $SO(2)$-equivariant cohomology
of the point with the trivial $SO(2)$ action on it are isomorphic to the cohomology of $\C P^\infty$:
$$
H^\ast_{SO(2)}(\mathrm{pt};\Z) = H^\ast(\C P^n;\Z) = \Z[u], \ \ \deg u =2.
$$
For $G=SO(2)$ with the standard action by left multiplications we have
$$
H^\ast_{SO(2)}(SO(2);\Z) = H^\ast(\mathrm{pt};\Z) = H^0(\mathrm{pt};\Z) = \Z,
$$ compare for example~\cite{Hi} or \cite{Ra89}.

\subsection{The exact homotopy sequence for the spaces of non-contractible closed curves}

Given $h \in \pi_1(M,x_0)$, we denote by
$[h] \in [S^1,M]$ the corresponding free homotopy class of closed  curves and by
$$
\Lambda M[h] \subset \Lambda M \ \ \ \mbox{and} \ \ \ LM[h] \subset LM
$$
the connected components of $\Lambda M$ and $LM$ consisting of curves from $[h]$.

Let
$h$ be realized by a map $\omega: [0,1] \to M$ with $\omega(0) = x_0$, and
let $h_i$ be the automorphism
$$
h_i: \pi_i(M,x_0) \to \pi_i(M,x_0)
$$
corresponding to the standard action of $h \in \pi_1$ on $\pi_i$.

We have

% % % % % % % % % % % % % % % % % % % % % % % %
\begin{theorem}[\cite{Ballmann},\cite{T1985}]
The mapping
$$
\pi: \Lambda M \to M, \ \ \ \pi(\gamma) = \gamma(0),
$$
which maps a closed curve $\gamma$ onto the marked point  $\gamma(0)$,
is a Serre fibration with the fibre $\Omega M$:
$$
\Lambda M \stackrel{\Omega M}{\longrightarrow} M.
$$
The exact homotopy sequence for this fibration restricted onto  $\Lambda M[h]$
takes the form
\begin{equation}
\label{seq}
\dots \to \pi_i (\Lambda M [h], \omega) \stackrel{\pi_\ast}{\longrightarrow} \pi_i (M,x_0)
\stackrel{f_i}{\longrightarrow} \pi_{i-1} (\Omega_{x_0}(M), \omega) =
\end{equation}
$$
= \pi_i(M,x_0)
\to \pi_{i-1} (\Lambda M[h], \omega) \to \dots
$$
where

a) $\pi_\ast(\pi_i(\Lambda M[h], \omega)) = \mathrm{St}\,(h_i)$, where $\mathrm{St}\,(h_i)$ is the subgroup of
$\pi_i(M, x_0)$ consisting of all elements fixed under $h_i$;

b) $f_i = h_i - \mathrm{id}$ for $i \geq 2$.

Moreover, the homotopy sequence finishes as
$$
\to \pi_2(M) = \pi_1(\Omega M[h]) \to \pi_1(\Lambda M[h]) \to \mathrm{St}(h) \to 1,
$$
where $\mathrm{St}(h)$ is the subgroup of $\pi_1(M)$ formed by all elements which commute with $h$.
\end{theorem}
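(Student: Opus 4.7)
The plan is to treat this as the long exact sequence of a Serre fibration, with the main work being the identification of the connecting homomorphism with $h_i - \mathrm{id}$.

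First I would verify that $\pi \colon \Lambda M \to M$, $\gamma \mapsto \gamma(0)$, is a Serre fibration by checking the homotopy lifting property against cubes. Given a homotopy $F \colon I^k \times I \to M$ together with an initial lift $\widetilde F_0 \colon I^k \to \Lambda M$, one constructs a continuous extension $\widetilde F$ by translating each loop $\widetilde F_0(x)$ so that its marked point follows the path $t \mapsto F(x,t)$; this is done continuously by fixing an auxiliary Riemannian metric, transporting loops via the exponential map on small neighborhoods, and gluing by a partition of unity. The fiber over $x_0$ is tautologically $\Omega_{x_0} M$, and the restriction to $\Lambda M[h]$ remains a Serre fibration over $M$ whose fiber is the union of those components of $\Omega_{x_0} M$ represented by based classes conjugate to $h$ (each weakly equivalent to $\Omega_{x_0} M$ in degrees $i\ge 1$). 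Substituting the canonical isomorphism $\pi_{i-1}(\Omega_{x_0} M, \omega) \cong \pi_i(M, x_0)$ into the long exact sequence of the fibration then produces~(\ref{seq}).

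The main step, and the chief obstacle, is to identify the connecting homomorphism $f_i$ with $h_i - \mathrm{id}$ for $i\ge 2$. For $\alpha \colon (I^i, \partial I^i) \to (M, x_0)$ representing a class in $\pi_i(M,x_0)$, I would compare two attempted lifts to $\Lambda M[h]$ starting at $\omega$. On $I^i$ one assigns to each $s$ a path $p_s$ from $x_0$ to $\alpha(s)$ (such a family exists since $I^i$ is contractible) and forms the conjugated loop $\gamma_s = p_s \cdot \omega \cdot p_s^{-1}$; this lift restricts on $\partial I^i$ to the constant family at $\omega$. A second lift is obtained by prepending the loop $\omega$ to each $p_s$: its restriction to $\partial I^i$ represents, by the very definition of the $\pi_1$-action on $\pi_i$ via conjugation by a representing loop, the class $h_i(\alpha)$ inside $\pi_{i-1}(\Omega_{x_0}M)\cong \pi_i(M)$. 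The difference of the two lifts on $\partial I^i$ is precisely $f_i(\alpha)$, and so $f_i(\alpha) = h_i(\alpha) - \alpha$. Part (a) then follows from exactness as the kernel computation $\ker(h_i - \mathrm{id}) = \mathrm{St}(h_i)$.

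For the tail at $\pi_1$ a separate treatment is needed because $\pi_1(M)$ is nonabelian. An element $g \in \pi_1(M, x_0)$ lies in $\pi_\ast(\pi_1(\Lambda M[h]))$ exactly when transporting $\omega$ once around $g$ returns it to the same free homotopy class, and since this transport conjugates the based class of $\omega$ by $g$, the condition is $ghg^{-1} = h$, i.e.\ $g \in \mathrm{St}(h)$. The identification $\pi_1(\Omega M[h], \omega) \cong \pi_2(M)$ (since higher homotopy groups do not depend on the chosen component of $\Omega M$) then closes out the displayed tail. The only delicate point throughout is tracking sign and orientation conventions so that the connecting map matches the $\pi_1$-action on $\pi_i$ with the correct sign.
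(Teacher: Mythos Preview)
The paper does not prove this theorem; it is quoted from \cite{Ballmann} and \cite{T1985} as background and used as a black box, so there is no in--paper argument to compare against.

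Your outline has the right architecture, but the ``two lifts'' computation of $f_i$ contains a genuine gap. First a minor point: $\gamma_s=p_s\cdot\omega\cdot p_s^{-1}$ is a loop at $x_0$, not at $\alpha(s)$, so it does not lift $\alpha$ through $\pi$; you mean $p_s^{-1}\cdot\omega\cdot p_s$. More seriously, for this lift to restrict to the constant family $\omega$ on \emph{all} of $\partial I^i$ you need $p_s$ to be the constant path for every $s\in\partial I^i$, and a continuous family $s\mapsto p_s$ from $x_0$ to $\alpha(s)$ with that boundary condition is exactly a based nullhomotopy of $\alpha$; it exists only when $[\alpha]=0$ in $\pi_i(M)$. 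So for nontrivial $\alpha$ your first lift does not exist. Moreover, replacing $p_s$ by $\omega\cdot p_s$ and conjugating gives $(\omega p_s)^{-1}\omega(\omega p_s)=p_s^{-1}\omega p_s$ again, so your two lifts actually coincide and their ``difference'' cannot detect $h_i(\alpha)-\alpha$. The correct computation uses a single lift: write $I^i=I^{i-1}\times I$, take $p_{(s',u)}(t)=\alpha(s',tu)$ (which is constant on $J^{i-1}=(I^{i-1}\times\{0\})\cup(\partial I^{i-1}\times I)$, as required for the connecting map), form $\widetilde\alpha(s',u)=p_{(s',u)}^{-1}\,\omega\,p_{(s',u)}$, and read off the top face $s'\mapsto \alpha_{s'}^{-1}\omega\alpha_{s'}$ in $\pi_{i-1}(\Omega_{x_0}M,\omega)$. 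Passing to $\pi_i(M,x_0)$ by left multiplication with $\omega^{-1}$ (to move the basepoint to the constant loop) and Eckmann--Hilton yields $h_i(\alpha)-\alpha$ up to the sign convention, i.e.\ the Whitehead product $[h,\alpha]$ as the paper notes right after the statement. Your verification that $\pi$ is a Serre fibration and your treatment of the $\pi_1$ tail are fine.
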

% % % % % % % % % % % % % % % % % % % % % % % %
The maps $
f_i$ are written uniformly in the simple form
$$
f_{i}(g) = [h,g], \ \ \ g \in  \pi_{i}(M,x_0), \ \ \ i \geq 1,
$$
where
$[h,g]$ is the Whitehead product of $h \in \pi_1$ and $g \in \pi_{i}$.

By the Serre theorem,
$$
\pi_{i-1}(\Omega M) = \pi_i(M),
$$
and for the simplest case $h=1$ (the unit of $\pi_1$) we have $h_\ast = \mathrm{id}$ and the exact sequence splits into short exact subsequences
of the form
$$
0 \to \pi_{i+1}(M) \to \pi_i(\Lambda M) \stackrel{\pi_\ast}{\longrightarrow} \pi_i(M) \to 0.
$$

In the sequel we take tensor products of all entries with $\Q$ over $\Z$. Therewith all higher homotopy groups become
$\Q[\pi_1]$-modules (a priori such modules can be even infinitely generated) which we denote by $\pi^\Q_i$.

\subsection{The type numbers of closed geodesics} A point $\gamma \in \Lambda M$
is a closed geodesic if it is a critical point of the length functional
$$
S(\gamma) = \int _\gamma F(x,\dot{x})dt,
$$
where $F(x,v), x \in M, v \in T_xM$, is positive for $v \neq 0$ and homogeneous of  first order in $v$:
$$
F(x,\lambda v) = \lambda F(x,v) \ \ \mathrm{for} \ \ \lambda \geq 0,
$$
and the set $S_xM=\{v\in T_xM\,;\,F(x,v)=1\}\}
	\subset T_xM$
	of unit vectors is a strictly convex subset.
This action functional is called the Finsler length.
If
$$
F(x,v) = \sqrt{g_{ik}(x)v^iv^k},
$$
where $g_{ik}$ is the Riemannian metric tensor, we get the usual length with respect to the Riemannian metric.
The Finsler metric is called
\emph{reversible} if $F(x,v) = F(x,-v)$ for all $x$ and $v \in T_xM$.

The classical Morse theory is applicable to the study of critical points of $S$ on the spaces $\Lambda M$.
We omit the exposition of the standard things and only discuss an important statement due to Schwarz.

It is clear that every iterate of a closed geodesic is a closed geodesic itself.
For a reversible Finsler metric the inverse $\gamma^{-1}$ of the geodesic $\gamma$ is also geodesic.
For a non-reversible Finsler metric we say that two closed geodesics are
\emph{(geometrically) distinct} if they are not iterates of the same
closed curve with the same
orientation. Geodesics of a Riemannian metric or, more general, reversible Finsler metric are (geometrically) distinct if they
are not iterates of a certain geodesic and its inverse.

A metric is called \emph{bumpy} if all its closed geodesics are non-degenerate in the Morse
theoretic sense except the trivial one-point extremals.

We assign to every nontrivial closed geodesic its \emph{type numbers} which are
$$
t_i = \mathrm{rank}\, H_i\left(\{S < S_0\} \cup \gamma,\{S<S_0\};\Q\right) \ \ \ \mbox{where $S_0=S(\gamma)$}
$$
and $S$ is considered as a functional on $\Lambda/SO(2)$ or on $\Lambda_{SO(2)}$ (on the quotient or on the homotopy quotient of $\Lambda = \Lambda M$).
It is evident that for nondegenerate closed geodesics $t_i \in \{0,1\}$.

It was established by Schwarz that, if $\gamma$ is a prime closed geodesic, then
$$
t_i (\gamma^k) = \delta_{i,\mathrm{ind}\,\gamma^k}
$$
if the Morse index of $\gamma^k$, i.e., $\mathrm{ind}\,\gamma^k$, has the same parity as $\mathrm{ind}\, \gamma$
and
$$
t_i (\gamma^k) = 0 \ \ \ \mathrm{otherwise}.
$$
Therefore, we have

\begin{proposition}
\label{prop-schwarz} \cite{Schwarz}
If for two nondegenerate closed geodesics $\gamma_1$ and $\gamma_2$, we have
$$
t_i(\gamma_1) =1, \ \ \  t_j(\gamma_2) = 1, \ \ \ i-j \equiv 1~\mod\, 2,
$$
then these geodesics are distinct.
\end{proposition}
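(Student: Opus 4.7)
The plan is to derive the proposition as an immediate corollary of the Schwarz formula that has just been stated, via a parity argument by contradiction. Suppose that $\gamma_1$ and $\gamma_2$ are \emph{not} geometrically distinct; then by definition they are iterates of a single prime closed geodesic $\gamma$ (in the reversible case possibly also of its reverse $\gamma^{-1}$, but this is harmless, since reversing the parametrization does not change the Morse index, so the parity of $\mathrm{ind}\,\gamma^{-1}$ equals that of $\mathrm{ind}\,\gamma$). Hence we may write $\gamma_1 = \gamma^{k_1}$ and $\gamma_2 = \gamma^{k_2}$ for some positive integers $k_1, k_2$.

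Now I apply Schwarz's formula twice. Since $t_i(\gamma_1) = 1$, the formula forces $i = \mathrm{ind}\,\gamma^{k_1}$, together with the compatibility condition
\[
\mathrm{ind}\,\gamma^{k_1} \equiv \mathrm{ind}\,\gamma \pmod{2},
\]
because otherwise all type numbers of $\gamma^{k_1}$ would vanish. In exactly the same way, $t_j(\gamma_2) = 1$ yields $j = \mathrm{ind}\,\gamma^{k_2}$ with
\[
\mathrm{ind}\,\gamma^{k_2} \equiv \mathrm{ind}\,\gamma \pmod{2}.
\]
Combining the two congruences gives $i \equiv j \pmod{2}$, which directly contradicts the hypothesis $i - j \equiv 1 \pmod{2}$. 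Hence $\gamma_1$ and $\gamma_2$ must be geometrically distinct.

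There is no substantial obstacle here: the whole content of the proposition is already packaged inside the Schwarz formula, and the proof is just an extraction of parity information. The only point that requires a brief verification is the treatment of the reversible case, where one must note that $\gamma$ and $\gamma^{-1}$ produce the same sequence of Morse indices along their iterates, so that either choice of prime representative yields the same parity obstruction.
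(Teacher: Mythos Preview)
Your proof is correct and matches the paper's approach exactly: the paper does not give an explicit proof but simply writes ``Therefore, we have'' immediately after stating the Schwarz formula, so the intended argument is precisely the parity extraction by contradiction that you wrote out. Your handling of the reversible case (noting that $\gamma$ and $\gamma^{-1}$ have the same index parities) is a reasonable extra remark that the paper leaves implicit.
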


This proposition is very useful in combination with another statement derived from the Bott index formula for iterates of geodesics:

\begin{proposition}
\label{prop-bott} \cite{Bott56}
If $\gamma$ is a nondegenerate
prime closed geodesic, then the indices of $\gamma$ and its iterates of odd order have the same parity:
$$
\mathrm{ind}\, \gamma^{2k+1} - \mathrm{ind}\,\gamma \equiv 0 \mod 2.
$$
\end{proposition}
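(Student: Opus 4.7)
The plan is to invoke Bott's index iteration formula and exploit the symmetry of Bott's function under complex conjugation, together with the fact that odd roots of unity contain exactly one real element.

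More concretely, associated to the prime closed geodesic $\gamma$ there is the linearized Poincar\'e return map $P$, a symplectic endomorphism of a real symplectic vector space of dimension $2(n-1)$. Bott constructs a function $\Lambda_\gamma\colon S^1\to\Z$ (depending only on the symplectic conjugacy class of $P$) such that for every $m\ge 1$
\[
\mathrm{ind}(\gamma^m)\;=\;\sum_{\omega^m=1}\Lambda_\gamma(\omega),
\]
with the convention that, at eigenvalues of $P$ on $S^1$, $\Lambda_\gamma$ is defined so that the formula remains valid (this handles possible degeneracy of some iterates; the nondegeneracy assumption on $\gamma$ itself only ensures that $1$ is not an eigenvalue on the reduced symplectic space, so that $\Lambda_\gamma(1)$ unambiguously equals $\mathrm{ind}(\gamma)$). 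The first step would be to recall this formula and the definition of $\Lambda_\gamma$ in enough detail to legitimize its use.

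The second step is the symmetry observation: since $P$ is a real symplectic map, its characteristic data is invariant under complex conjugation, and consequently
\[
\Lambda_\gamma(\bar\omega)\;=\;\Lambda_\gamma(\omega)\quad\text{for every }\omega\in S^1.
\]
Now for odd $m=2k+1$ the only real $m$-th root of unity is $\omega=1$; all other $m$-th roots split into complex conjugate pairs $\{\omega,\bar\omega\}$ with $\omega\ne\bar\omega$. Hence
\[
\mathrm{ind}(\gamma^{2k+1})\;=\;\Lambda_\gamma(1)\;+\sum_{\{\omega,\bar\omega\}}\bigl(\Lambda_\gamma(\omega)+\Lambda_\gamma(\bar\omega)\bigr)\;=\;\Lambda_\gamma(1)+2\!\!\!\sum_{\{\omega,\bar\omega\}}\!\!\Lambda_\gamma(\omega).
\]
Taking this congruence modulo $2$ and using $\Lambda_\gamma(1)=\mathrm{ind}(\gamma)$ yields the claim $\mathrm{ind}(\gamma^{2k+1})\equiv\mathrm{ind}(\gamma)\pmod 2$.

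The main obstacle I would expect is not the algebraic manipulation, which is essentially immediate once the framework is set up, but rather the bookkeeping around possibly degenerate odd iterates: one must make sure that the chosen convention for $\Lambda_\gamma$ at eigenvalues on $S^1$ is compatible with the usual Morse index and respects the conjugation symmetry. Once Bott's function is introduced with its standard (splitting-number) normalization, the symmetry under $\omega\mapsto\bar\omega$ is automatic from the reality of $P$, and the parity argument for odd $m$ goes through verbatim.
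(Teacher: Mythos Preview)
Your argument is the standard and correct one, but note that the paper does not actually give a proof of this proposition: it is simply stated with a reference to Bott~\cite{Bott56} and then used as a black box. So there is no ``paper's own proof'' to compare against; you have supplied what the paper omits. The route you take---Bott's index formula $\mathrm{ind}(\gamma^m)=\sum_{\omega^m=1}\Lambda_\gamma(\omega)$, the conjugation symmetry $\Lambda_\gamma(\bar\omega)=\Lambda_\gamma(\omega)$, and the observation that the nontrivial $(2k{+}1)$-th roots of unity pair off into conjugate pairs---is exactly how this parity statement is derived from Bott's work.
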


\subsection{$\pi_1(\Lambda M[h])$ and geometrically distinct closed geodesics in $\Lambda M[h]$}

\begin{proposition}
\label{oldsmj} \cite{T1993}
If
$\mathrm{St} (h)$
is non-cyclic, then there are at least two
geometrically distinct closed geodesics in $\Lambda M[h])$.
\end{proposition}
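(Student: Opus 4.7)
The plan is to argue by contradiction: assume that $c$ is the unique geometrically distinct closed geodesic in $\Lambda M[h]$, and deduce that $\mathrm{St}(h)$ must then be cyclic, contradicting the hypothesis. The key tools will be Morse theory for the length functional $S$ on $\Lambda M$ and the tail of the exact sequence~(\ref{seq}).

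First I would analyze the minimum set $K\subset \Lambda M[h]$ of $S$. Under the uniqueness assumption, every closed geodesic in $\Lambda M[h]$ is, up to reparametrization and (in the reversible case) orientation, an iterate of $c$; any such iterate distinct from $c$ itself has strictly greater length and hence is not a global minimizer, even in the exceptional cases when it happens to lie in $\Lambda M[h]$ (e.g.\ when $h$ has finite order or is conjugate to $h^{-1}$). Therefore $K$ coincides with the $SO(2)$-orbit of $c$, which is homeomorphic to $S^1$ since $c$ is non-constant.

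Next I would invoke Morse theory: $S$ satisfies the Palais--Smale condition on $\Lambda M$, and by assumption $K$ is the only critical orbit in $\Lambda M[h]$. The negative gradient flow then deformation retracts each sublevel set $\{S\le a\}$, and consequently the whole component $\Lambda M[h]$, onto $K\simeq S^1$. This gives $\pi_1(\Lambda M[h])\cong \Z$.

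Finally, the tail of the exact sequence~(\ref{seq}),
\[
\pi_2(M)\to \pi_1(\Lambda M[h])\to \mathrm{St}(h)\to 1,
\]
provides a surjection $\Z\cong\pi_1(\Lambda M[h])\twoheadrightarrow \mathrm{St}(h)$, forcing $\mathrm{St}(h)$ to be cyclic, a contradiction. The main obstacle I anticipate is justifying the global deformation retraction of the infinite-dimensional $\Lambda M[h]$ onto a potentially degenerate minimum set $K$; this is handled by Palais--Smale together with the absence of other critical values in $\Lambda M[h]$, and it is the only delicate step in an otherwise straightforward combination of Morse theory with the homotopy exact sequence of the evaluation fibration.
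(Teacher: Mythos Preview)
Your strategy is close in spirit to the paper's algebraic argument. The paper passes to the homotopy quotient and uses the sequence $\pi_1(S^1)\to\pi_1(\Lambda[h])\to\pi_1(\Lambda[h]_{SO(2)})\to 1$, noting that the image of $\pi_1(S^1)$ is the cyclic group $\langle h\rangle$; since $\mathrm{St}(h)$ is non-cyclic this forces $\pi_1(\Lambda[h]_{SO(2)})\neq 1$, so the quotient is not contractible and must carry a critical point above the minimum. You reach essentially the same endpoint via the surjection $\pi_1(\Lambda M[h])\twoheadrightarrow\mathrm{St}(h)$ coming from~(\ref{seq}).

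There is, however, a real gap. You correctly note that iterates $c^k$ (or $c^{-1}$ in the reversible case) may lie in $\Lambda M[h]$ when $h^k$ (resp.\ $h^{-1}$) happens to be conjugate to $h$, and that such curves are not global minimizers. But in the very next paragraph you assert ``by assumption $K$ is the only critical orbit in $\Lambda M[h]$''---this does not follow from your contradiction hypothesis. Those iterates, when present, \emph{are} additional critical orbits sitting at higher levels, and they obstruct the global deformation retraction of $\Lambda M[h]$ onto $K\simeq S^1$; passing through such a level can change $\pi_1$, so you cannot conclude $\pi_1(\Lambda M[h])\cong\Z$.

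The paper's primary (geometric) argument avoids this by never claiming to know all critical orbits. It builds one explicit loop in $\Lambda M[h]$ from a map $T^2\to M$ realizing the commuting pair $(h,g)$, with $g\in\mathrm{St}(h)$ chosen so that $h,g$ do not lie in a common cyclic subgroup, and shows that if \emph{this particular} loop could be pushed down to the minimum orbit $K$, then $g$ would be forced into the cyclic group generated by the prime curve underlying the minimizer $\gamma$---contradicting the choice of $g$. Only the local structure of $S$ near the minimum is needed, not the absence of all higher critical orbits, and that is precisely the ingredient your version is missing.
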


\begin{proof}
First, let us recall the original argument from \cite{T1993} (see Lemma 3).
Let two loops $\gamma$ and $\eta$ realize elements $h$ and $g$ from $\mathrm{St}$ which does not lie
in a cyclic subgroup of $\mathrm{St}(h)$. Moreover we assume that $\gamma$ realizes a closed geodesic of minimal length in this homotopy class.
Take a mapping of a two-torus
$T^2 \to M$ such that one of the parallels is mapped into $\gamma$ and another parallel is mapped into
$\eta$. This torus is swept out by closed curves homotopic to $\gamma$ and
deforming this loop in $\Lambda[h] (=\Lambda M[h])$ we have either to stop at some nontrivial closed geodesic $\gamma_1$ or to deform the whole loop into $\gamma$.  In the latter case the second parallel is deformed
to some iterate of a curve $\gamma_0$ such that $\gamma$ is also an iterate of $\gamma_0$.
However that contradicts the choice of $\eta$.
Hence we have a closed geodesic in $\Lambda [h]$ which is distinct from $\gamma$.

Algebraically this argument is reformulated as follows.
Let us consider the homotopy quotient $\Lambda[h]_{SO(2)}$.
We have the fibration
\begin{equation}
\label{fibration}
\Lambda[h] \times ESO(2) \stackrel{S^1}{\longrightarrow} \Lambda[h]_{SO(2)}
\end{equation}
and from the exact homotopy sequence for this fibration we have
\begin{equation}
\label{caseb}
\to \pi_1(S^1) \to \pi_1(\Lambda[h]) \to \pi_1(\Lambda[h]_{SO(2)}) \to 1.
\end{equation}
By construction, the image of $\pi_1(S^1)$ is just the cyclic subgroup generated by $h$.
Hence
\begin{equation}
\label{caseb2}
\pi_1(\Lambda[h]_{SO(2)}) \neq 1
\end{equation}
and therefore $\Lambda[h]_{SO(2)}$ is not contractible and contains critical points different from the minimal closed geodesic.
\end{proof}

\begin{corollary}
\label{coroldsmj}
If the metric is bumpy and the conditions of Proposition \ref{oldsmj} hold then there are two
geometrically distinct closed geodesics in $\Lambda M[h]$.
\end{corollary}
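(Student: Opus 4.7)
The plan is to observe that Corollary \ref{coroldsmj} is essentially an immediate consequence of Proposition \ref{oldsmj}: the proposition already produces two geometrically distinct closed geodesics in $\Lambda M[h]$ whenever $\mathrm{St}(h)$ is non-cyclic, with no restriction on the metric, and this conclusion obviously persists when we additionally assume the metric is bumpy. The point of restating it in the bumpy case is not to strengthen the existence claim but to record the hypothesis under which the two geodesics produced can later be fed into the Morse-theoretic machinery of Propositions \ref{prop-schwarz} and \ref{prop-bott}, which requires non-degeneracy.

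Concretely, I would simply reuse the proof of Proposition \ref{oldsmj}. From the fibration (\ref{fibration}) and the exact sequence (\ref{caseb}), the non-cyclicity of $\mathrm{St}(h)$ gives $\pi_1(\Lambda[h]_{SO(2)}) \neq 1$ as in (\ref{caseb2}), so the homotopy quotient $\Lambda[h]_{SO(2)}$ is not contractible. Letting $\gamma$ be an absolute length-minimizer in $\Lambda M[h]$, standard Morse theory applied to the length functional on $\Lambda[h]_{SO(2)}$ produces at least one further critical orbit, i.e.\ a closed geodesic $\gamma_1 \in \Lambda M[h]$ whose class in $\Lambda[h]_{SO(2)}$ is not that of $\gamma$.

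To upgrade distinctness as critical points to \emph{geometric} distinctness I would invoke the torus-sweep argument already used in the proof of Proposition \ref{oldsmj}: if $\gamma_1$ were an iterate of $\gamma$ then one could deform the non-contractible loop in $\Lambda[h]_{SO(2)}$ furnished by (\ref{caseb2}) to a constant loop along the image of the torus $T^2 \to M$, contradicting $\pi_1(\Lambda[h]_{SO(2)}) \neq 1$. The bumpy hypothesis enters only to guarantee that $\gamma$ and $\gamma_1$ are isolated non-degenerate critical $SO(2)$-orbits, so that the type numbers $t_i(\gamma)$ and $t_i(\gamma_1)$ are well-defined and behave as expected in subsequent applications. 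I do not anticipate a substantive obstacle: the work is already done in Proposition \ref{oldsmj}, and the bumpy condition serves as convenient bookkeeping rather than a new ingredient in the argument.
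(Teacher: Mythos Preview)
Your observation that the Corollary is logically implied by Proposition~\ref{oldsmj} alone is correct, so your argument is valid. However, the paper does not prove the Corollary this way, and your description of the role of the bumpy hypothesis as mere ``bookkeeping for later'' misreads the authors' intent.

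In the paper's proof, the bumpy assumption is used substantively. From $\pi_1(\Lambda[h]_{SO(2)})\neq 1$ in~(\ref{caseb2}) together with non-degeneracy of all closed geodesics, standard Morse theory on the homotopy quotient forces the existence of a closed geodesic of Morse index exactly~$1$ in $\Lambda M[h]$ (otherwise the space would be built only from cells of dimension $0$ and $\ge 2$, hence simply connected). The minimal geodesic has index~$0$. The authors then invoke Propositions~\ref{prop-schwarz} and~\ref{prop-bott} directly inside the proof of the Corollary: since the two nonzero type numbers occur in degrees of opposite parity, Schwarz's criterion yields geometric distinctness. Thus the paper's route is: bumpiness $\Rightarrow$ index-$1$ geodesic $\Rightarrow$ parity argument via Propositions~\ref{prop-schwarz}--\ref{prop-bott}, rather than a reprise of the torus-sweep argument.

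What each approach buys: yours is shorter and shows the bumpy hypothesis is formally redundant for the bare existence statement. The paper's argument, by contrast, extracts the extra information that the second geodesic has index~$1$, and illustrates the Schwarz--Bott parity mechanism in the simplest setting before it is needed elsewhere. Your claim that Propositions~\ref{prop-schwarz} and~\ref{prop-bott} enter only ``in subsequent applications'' is therefore inaccurate for this paper: they are precisely the tool the authors use here to conclude geometric distinctness.
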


\begin{proof}
Indeed, from (\ref{caseb2}) and the bumpyness of the metric it follows that there is closed geodesic of index one. By Propositions \ref{prop-schwarz} and \ref{prop-bott},  it has to be geometrically distinct from the minimal
closed geodesic in this homotopy class.
\end{proof}
% % % % % % % % % % % % % % % % % % % % % % % % % % % % % % % % % % % % % % % % % % % % %
\section{
	Generic conditions for Riemannian and Finsler metrics and the existence of closed geodesics}
\label{sec:generic}
% % % % % % % % % % % % % % % % % % % % % % % % % % % % % % % % % % % % % % % % % % % % % %
In this section we discuss how known generic
properties of Riemannian metrics are also
generic for Finsler metrics.
A Finsler metric $F: TM\longrightarrow \R$
defines for a point $x \in M$
a family of Riemannian metrics $g^y$ on the tangent space
$T_xM$ parametrized by unit vectors
$y \in S_xM:=\{w \in T_xM\,;\, F(w)=1\}.$
For a coordinate system $x=(x_1,\ldots,x_n)$
for a chart $U\subset M$ of the manifold $M$
with corresponding chart $(x,y)=(x_1,\ldots,x_n,
y_1,\ldots,y_n)$ for the tangent bundle $TU$
the metric coefficients $g_{ij}(x,y)=g_{ij}^y(x)$
are given by:
\begin{equation}
\label{eq:finsler-metric-coeff}
g_{ij}(x,y)=g^y_{ij}(x)=
\frac{1}{2}
\frac{\partial F^2}{\partial y_i \partial y_j}(x,y)\,.
\end{equation}
For a closed geodesic $c:\R \longrightarrow M$
of the Finsler metric
parametrized by arc length
we can extend the
velocity vector field $c'$ along $c$ to a non-zero
vector field $V$ in a tubular neighborhood
$U\subset M$ of
the closed geodesic.
Then the Riemannian metric $g^V$ is called
\emph{osculating Riemannian metric.}
Now we follow the notation of Anosov~\cite{An}.
We choose Fermi coordinates $(u_0,u)=
(u_0,u_1,\ldots,u_n)$ (here $n+1=\dim M$)
in a tubular neighborhood
of the closed geodesic with respect to the
osculating Riemannian metric $g^V.$
Hence $u_0(t)=t, u(t)=0$
parametrizes the closed geodesic
and for any $t_0$
and any $v \in \R^n$ the curve
$u_0(t)=t_0, u(t)=tv$ is a geodesic, too.
Then for the corresponding metric coefficients
we obtain
\begin{equation}
g_{ij}^V(t,0)=\delta_{ij}, 0\le i,j\le n\,;\,
\frac{\partial }{\partial u_k}g_{ij}^V(t,0)=0\,.
\end{equation}
Then $c$ is also a closed geodesic of $g^V$ with the same
length. The
\emph{flag curvature} $K(V(x);\sigma)$ of the Finsler
metric of a flag $(V(x),\sigma)$ with a plane $\sigma
\in T_xM, V(x)\in \sigma$ agrees with the sectional curvature
$K(\sigma)$
of the osculating Riemannian metric $g^V,$
cf.~\cite{Ra04}.
Parallel transport along $c$ as well as the index
and nullity of the closed geodesic of the Finsler metrics
agree with
the parallel transport and
the index and nullity of the osculating Riemannian
metric.
This allows to extend the proof of the bumpy metric theorem
for Riemannian metrics presented by Anosov~\cite[Thm.1]{An}
to the Finsler case.
This seems to be obvious to experts, for
example Ginzburg and G\"urgel mention that the
statement follows from Anosov's proof without
giving details, cf.~\cite[Example 1.15]{GG}.
In the sequel we indicate how this extension works.

For a compact manifold $M$ denote by
$\mathcal{F}^r(M)$ the set of $C^r$-Finsler metrics
$F: TM \longrightarrow \R$
endowed with the strong $C^r$-topology.
Similar to the \emph{bumpy metrics theorem}
for Riemannian manifolds due to Abraham~\cite{Ab} resp.
Anosov~\cite{An} the corresponding
statement holds for Finsler metrics:
% % % % % % % % % % % % % % % % % % % % % % % % % % % % % % % % % % % % % %
\begin{theorem}
	\label{thm:bumpy}
For a compact differentiable manifold a $C^r$-generic Finsler for $r\ge 4$
metric is bumpy.
\end{theorem}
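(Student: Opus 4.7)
The plan is to set up the standard Baire-category scheme for generic metric properties and to reduce the Finsler bumpiness question to the Riemannian case via the osculating Riemannian metric $g^V$ introduced above.

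First I would, for each $L>0$ and each $m \in \n$, single out the subset $\mathcal{U}_{L,m} \subset \mathcal{F}^r(M)$ of Finsler metrics all of whose closed geodesics of length at most $L$ and iteration order at most $m$ are non-degenerate. Openness of $\mathcal{U}_{L,m}$ in the strong $C^r$-topology for $r\ge 4$ follows from compactness of the set of periodic orbits of bounded length together with continuous dependence of the linearized Poincar\'e map on the $C^2$-jet of the metric along the orbit. Bumpiness corresponds to the countable intersection of the sets $\mathcal{U}_{L,m}$, which is residual by Baire as soon as each $\mathcal{U}_{L,m}$ is dense; hence the theorem reduces to the density step.

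For density, I would fix $F$ and a degenerate closed geodesic $c$ of length at most $L$ and minimal iteration order, pass to the Fermi coordinates $(u_0,u)$ along $c$ with respect to the osculating Riemannian metric $g^V$ set up above, and invoke the fact recorded there that parallel transport, index and nullity of $c$ for $F$ coincide with those for $g^V$. Anosov's perturbation argument in \cite[Thm.~1]{An} then supplies a $C^r$-small, compactly supported perturbation $\delta g$ of $g^V$ that moves the eigenvalues of the Poincar\'e map off all $k$-th roots of unity for $k\le m$, thereby making $c$ and its iterates up to order $m$ non-degenerate; only finitely many other closed geodesics of length $\le L$ need to be left untouched, and this is arranged by shrinking the support of $\delta g$.

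The hard part, and the only genuinely new point compared with the Riemannian case, will be to realize $\delta g$ as coming from a Finsler perturbation $F_\varepsilon = F + \varepsilon\varphi$ with $\varphi \in C^r(TM)$ supported near the $S^1$-orbit of $(c,\dot c)$ in $TM\setminus 0$. By~\eqref{eq:finsler-metric-coeff} the assignment $\varphi\mapsto\delta g$ factors through the fibre $2$-jet of $\varphi^2$ at $y = V(x)$ along $c$, so any prescribed symmetric perturbation of $g^V$ to second order in $u$ along $c$ can be arranged by choosing the fibre $2$-jet of $\varphi$ appropriately at $(c(t),\dot c(t))$ and then extending smoothly and cutting off in both base and fibre directions. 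Since the fibre Hessian of $F^2$ is strictly positive on $TM\setminus 0$, strict convexity is preserved for small $\varepsilon$, so $F_\varepsilon$ is again a Finsler metric. Combined with Anosov's induction on iteration number, this realization lemma completes the density step and hence the theorem.
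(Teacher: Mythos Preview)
Your overall scheme---Baire decomposition into the sets $\mathcal{U}_{L,m}$, reduction to Anosov's Riemannian perturbation of $g^V$ along $c$, then lifting $\delta g$ back to a Finsler perturbation---is exactly the route the paper takes (the paper simply says Anosov's Lemmas~1 and~2 carry over once the lift is in hand, without spelling out the Baire framework). The one substantive step you have to supply, the realization lemma, is also the one place where your proposal has a genuine gap.

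A Finsler metric must be positively $1$-homogeneous in the fibre variable, so a perturbation $\varphi$ that is ``supported near the $S^1$-orbit of $(c,\dot c)$ in $TM\setminus 0$'' and ``cut off in \ldots\ fibre directions'' cannot be added to $F$ and still give a Finsler metric: a nontrivial $1$-homogeneous function is never compactly supported in the fibre. Checking strict convexity, as you do, is not the issue; homogeneity is. The paper resolves this by perturbing $F^2$ explicitly,
\[
F_s^2(u_0,u,y_0,y)=F^2(u_0,u,y_0,y)+s\,\phi(y)\Bigl(\sum_{k=1}^n f_k(u_0,u)\,u_k\Bigr)y_0^2,
\]
where $\phi$ is a smooth $0$-homogeneous function equal to $1$ near the direction $\dot c$. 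The product $\phi(y)\,y_0^2$ is then $2$-homogeneous as required, and the ``fibre cutoff'' is a localization in \emph{direction} rather than in $|y|$. One then checks that the osculating metric of $F_s$ is $g^V+s\,\delta g$ with precisely Anosov's $\delta g$ (only $\delta g_{00}$ nonzero). Note that the paper only realizes this specific $\delta g$; your stronger claim that an arbitrary symmetric perturbation of $g^V$ lifts is unnecessary and would need a separate argument. A minor slip: to first order in $\varepsilon$, the change in $g^V$ under $F\mapsto F+\varepsilon\varphi$ comes from the fibre Hessian of $2F\varphi$, not of $\varphi^2$.
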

% % % % % % % % % % % % % % % % % % % % % % % % % % % % % % % % % % % % % %
In contrast to the Riemannian case the order $r$ is at least
$4$ instead of $2.$ This is due to the fact that
Equation~\eqref{eq:finsler-metric-coeff}
shows that we obtain the osculating metric
by differentiating the Finsler metric twice
fibrewise.
We can extend the proof
given by Anosov using the following argument:
Let $c$ be a closed geodesic
$c$ of the Finsler metric $F$ with
an osculating Riemannian metric $g^V$
in a tubular neighborhood of $c$
defined for the vector field $V$
as above and let
 $x \in U
\mapsto \delta g(x)$
be a field of symmetric bilinear forms
on the tangent space $T_{x}M$
with $\delta g_{00}(u_0,0)=2\sum_{i=1}^n
f_i(u_0,u)u_i$ and
$\delta g_{ij}(u_0,u)=0$ for all
$i,j; i+j\ge 1.$
Here $f_i, i=1,\ldots,n+1$ are smooth functions with support
in a neigborhood of a point $(t_0,0),$
cf. \cite[\S 5]{An}.
We define
a one-parameter family
$s\in (-\epsilon,\epsilon)
\mapsto F_s$ of Finsler metrics
with $F=F_0$ by the
Equation:
\begin{equation}
F^2_s(u_0,u,y_0,y)=
F^2(u_0,u,y_0,y)+
s \phi(y) \sum_{k=1}^nf_k(u_0,u)u_k
y_0^2\,.
\end{equation}
Here
$\phi: \R^n \longrightarrow [0,1]$ is
$0$-homogeneous
smooth
function which is locally constant
with value $1$ in a neighborhood
of $(1,0,\ldots,0).$
Hence $F_s$ is a
smooth
perturbation of the Finsler metric
$F$ with the property that
$c$ is a closed geodesic for all Finsler metrics
$F_s.$ The osculating Riemannian metric
$g_s^V$ of $F_s$ with respect to the vector field
$V$ near $c$
is given by:
\begin{equation}
(g_s^V)_{ij}(u_0,u)=
g^V_{ij}(u_0,u)+\delta_{i0}\delta_{j0}
s\sum_{k=1}^n f_k(u_0,u)u_k\,.
\end{equation}
Then for all $s$ the curve $c$ is still a geodesic
for $F_s$
and the osculating Riemannian metric
$g_s^V$ near the closed geodesic $c$
is of the form: $g_s^V=g^V+s \delta g.$
Therefore Lemma 1 and Lemma 2 in Anosov's
proof of the bumpy metrics theorem carry over
to the Finsler setting which is sufficient
for the proof of
the Theorem.
In ~\cite{So} it was already shown that one can
perturb a closed geodesic of a Finsler metric
to become non-degenerate using the
osculating Riemannian metric.
The proof of
Theorem~\ref{thm:bumpy}
could also follow the approach presented by
Rifford and Ruggiero~\cite{RR}.
% % % % % % % % % % % % % % % % % % % % % % %
% % % % % % % % % % % % % % % % % % % % % %
% % % % % % % % % % % % % % % % % % % % % %
The \emph{linearized Poincar\'e map} $P_c$
of a closed geodesic is a linear symplectic map.
If $c$ is not hyperbolic denote by
$\exp(2\pi i\lambda_j(c)),
\lambda_j(c) \in [0,1/2], 1\le j\le l, l\le n-1$ the eigenvalues of
$P_c$ of modulus $1.$
The Finsler metric is called
\emph{strongly bumpy} if all eigenvalues
of the prime closed geodesics are simple and
if any finite set of the disjoint union of
the Poincar\'e exponents $\lambda_j(c)$
is algebraically independent,
cf.~\cite[Def.4.1]{Ra94}.
Then the proofs in~\cite{Ra94} carry over
to Finsler metrics and starting from
Theorem~\ref{thm:bumpy} one obtains:
% % % % % % % % % % % % % % % % % % % % % % %
% % % % % % % % % % % % % % % % % % % % % % %
\begin{theorem}
Let $M$ be a compact manifold.

\smallskip

(a) A $C^4$-generic Finsler metric is
strongly bumpy, cf.~\cite[Thm.4.3]{Ra94}

\smallskip

(b) A strongly bumpy metric on a simply-connected manifold carries infinitely many geometrically
distinct closed geodesics,
cf.~\cite[Thm.5.7]{Ra94}
\end{theorem}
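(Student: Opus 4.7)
The plan is to transfer the two statements from the Riemannian setting of \cite{Ra94} to the Finsler setting using the osculating Riemannian metric technology already developed in the proof of Theorem~\ref{thm:bumpy}. Recall from Section~\ref{sec:generic} that for a closed geodesic $c$ of a Finsler metric $F$, the index, nullity, linearized Poincar\'e map $P_c$, and its eigenvalues $\exp(2\pi i\lambda_j(c))$ all coincide with those of the osculating Riemannian metric $g^V$; and the one-parameter perturbation $F_s^2 = F^2 + s\phi(y)\sum_k f_k(u_0,u)u_k\,y_0^2$ realizes an arbitrary infinitesimal perturbation $\delta g$ of the osculating metric of the form required by Anosov. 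The key observation is that the proofs in \cite{Ra94} ultimately reduce to finite-dimensional perturbation lemmas for the restricted $1$-jet of the metric along the closed geodesic, and these lemmas only use perturbations of exactly this type.

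For part (a), I would start with a bumpy metric $F$ provided by Theorem~\ref{thm:bumpy}. Enumerate the prime closed geodesics by increasing length $c_1, c_2, \ldots$; because the metric is bumpy, only finitely many lie below any given length bound $T$. Given such a finite list, one must produce a $C^4$-small perturbation of $F$ that (i) preserves bumpyness, (ii) makes the eigenvalues of every $P_{c_k}$ simple, and (iii) makes any finite subcollection of the Poincar\'e exponents $\lambda_j(c_k)$ algebraically independent over $\mathbb{Q}$. In \cite[\S 4]{Ra94} this is accomplished by local perturbations of the Riemannian metric supported in tubular neighborhoods of each $c_k$, and the perturbed Poincar\'e map is shown to vary surjectively in the symplectic group. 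Using the Finsler perturbations $F_s$ above, exactly the same Riemannian perturbations are induced on the osculating metric, so the same symplectic-surjectivity lemma applies. A standard Baire-category argument over the increasing sequence of length thresholds $T\to\infty$ then produces a $C^4$-residual subset of $\mathcal{F}^r(M)$ consisting of strongly bumpy metrics.

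For part (b), assume $M$ is simply connected and $F$ is strongly bumpy. The Morse theory of the energy functional on $\Lambda M$ and on $\Lambda M/S^1$ (or on the homotopy quotient $\Lambda_{SO(2)}$) proceeds exactly as in the Riemannian case: at each nondegenerate prime closed geodesic $c$, its local homological contribution and the Bott iteration formula for $\mathrm{ind}\,c^m$ depend only on $P_c$, and strong bumpyness forces the iterated indices to grow linearly with a positive average index and to avoid low-codimension resonances. One then follows \cite[Thm.~5.7]{Ra94}: assume only finitely many prime closed geodesics, combine Schwarz's type-number computation (Proposition~\ref{prop-schwarz}) with the precise resonance-free structure provided by strong bumpyness to bound the total Betti numbers of $\Lambda M_{SO(2)}$ in each degree, and derive a contradiction against the Sullivan--Vigu\'e-Poirrier unbounded-Betti-number theorem for simply connected compact $M$ (with the separate handling of rank-one symmetric-space-like cases as in \cite{Ra94}). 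All Morse-theoretic input used is invariant under replacing $F$ near each $c$ by $g^V$, so no new argument is needed.

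The main obstacle I would expect to encounter is in part (a): one must verify that the symplectic surjectivity of the perturbation map used by \cite{Ra94} survives the restriction to Finsler perturbations. This is precisely the issue already addressed for Theorem~\ref{thm:bumpy}, where Anosov's two perturbation lemmas (Lemmas~1 and~2) were shown to hold for osculating-metric perturbations induced by $F_s$; checking that the more delicate perturbations needed for simplicity and algebraic independence of eigenvalues are also of this admissible form is the essential technical point, but it is of the same nature and requires no new construction beyond $F_s$.
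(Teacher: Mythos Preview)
Your proposal is correct and follows exactly the approach the paper takes: the paper gives no separate proof of this theorem but simply asserts (in the sentence preceding the statement) that ``the proofs in~\cite{Ra94} carry over to Finsler metrics'' once one has the Finsler bumpy-metrics theorem and the osculating-metric perturbation $F_s$ from Theorem~\ref{thm:bumpy}. Your sketch is a faithful and considerably more detailed elaboration of precisely this transfer, identifying the same key point---that the local perturbation lemmas in \cite{Ra94} use only variations of the osculating metric of the admissible form $g^V + s\,\delta g$ already realized by $F_s$.
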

% % % % % % % % % % % % % % % % % % % % % %
% % % % % % % % % % % % % % % % % % % % % %
Using the \emph{Birkhoff normal form} of a
symplectic mapping one defines the concept of a
non-hyperbolic closed geodesic of
(generalized) \emph{twist type.}
It is an open
and dense condition
for the $3$-jet of the Poincar\'e mapping
of a non-hyperbolic closed geodesic.
For a $C^4$-Riemannian metric on a compact manifold
either all closed geodesics are hyperbolic or
there exists a closed geodesic of
\emph{twist type.}
This can be shown starting from a bumpy metric
by perturbing the
$3$-jet of the Poincar\'e mapping of
single closed geodesics,
cf. \cite{KT} or \cite[Thm. 3.3.10]{Kl}.
Similar arguments can be used to show
the following
% % % % % % % % % % % % % % % % % % % % %
\begin{theorem}
\label{thm:twist}
Let $M$ be a compact manifold.

\smallskip

(a)
For $r\ge 6$ a  $C^r$-generic Finsler metric
either all closed
geodesics are hyperbolic or there exists
a non-hyperbolic closed geodesic of
twist type.

\smallskip

(b) In a tubular neighborhood of a non-hyperbolic closed geodesic of twist type there a
infinitely many geometrically distinct closed
geodesics.
geode
\end{theorem}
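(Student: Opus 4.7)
The plan is to extend the Klingenberg--Takens perturbation argument (cf.\ \cite{KT}, \cite[Thm.~3.3.10]{Kl}) from the Riemannian to the Finsler setting, using the same osculating-metric reduction that was already deployed in the proof of Theorem~\ref{thm:bumpy}. Part~(b) will then follow by applying the Birkhoff--Lewis fixed point theorem (cf.\ \cite{Mo}, \cite[Thm.~3.3.10]{Kl}) to the Poincar\'e map of a twist-type closed geodesic.

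For part~(a), I would start from the residual set of $C^r$-bumpy Finsler metrics produced by Theorem~\ref{thm:bumpy} ($r\ge 4$) and, for such a metric carrying a non-hyperbolic closed geodesic $c$, try to perturb $F$ so that the $3$-jet of the linearized Poincar\'e map $P_c$ is of twist type. Twist type is by construction an open and dense condition on $3$-jets of symplectic diffeomorphisms fixing the origin (non-degeneracy of the quadratic twist coefficient in the Birkhoff normal form), so it suffices to show that any prescribed perturbation of the $3$-jet of $P_c$ can be realized by an arbitrarily small, localized perturbation of $F$ that keeps $c$ a closed geodesic.

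To realize such perturbations, in Fermi coordinates $(u_0,u)$ with respect to the osculating Riemannian metric $g^V$ of $c$ I would use families of the form
\[
F^2_s(u_0,u,y_0,y)\;=\;F^2(u_0,u,y_0,y)\;+\;s\,\phi(y)\,h(u_0,u)\,y_0^{2},
\]
where $\phi$ is the $0$-homogeneous cutoff already used in the bumpy argument and $h(u_0,u)$ is a polynomial in $u$ of degree at most~$3$, with smooth compactly supported $u_0$-dependence, chosen to vanish to sufficiently high order at $u=0$ so that $c$ remains a closed geodesic of $F_s$ and only the third-order part of $P_c$ is affected. As in the computation following~\eqref{eq:finsler-metric-coeff}, the induced variation of the osculating metric is $g^V_s=g^V+s\,\delta g$ with $\delta g_{00}=h$ and all other components zero --- precisely the class of variations used by Klingenberg and Takens. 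Their proof that this class surjects onto the admissible perturbations of the $3$-jet of the Poincar\'e map then transfers verbatim. The regularity count is what forces $r\ge 6$: the Riemannian version requires $C^4$ control of the metric, and by~\eqref{eq:finsler-metric-coeff} the passage from Finsler data to the osculating metric costs two additional fibrewise derivatives.

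For part~(b), on the symplectic normal disk to $c$ the Poincar\'e map $P_c$ has all eigenvalues on the unit circle and its $3$-jet is in Birkhoff normal form with a non-degenerate twist. The Birkhoff--Lewis theorem then produces infinitely many periodic orbits of $P_c$ accumulating at the fixed point, with periods tending to infinity; each such orbit is the Poincar\'e trace of a closed geodesic of $F$ contained in the prescribed tubular neighborhood of $c$, and the combination of differing periods with distinct rotation numbers forces pairwise geometric distinctness, since two iterates of a common prime closed curve share their rotation number and have commensurable periods. The main obstacle is the surjectivity step in~(a) --- verifying that the cubic-in-$u$ scheme above reaches every admissible direction in the finite-dimensional space of $3$-jets of $P_c$. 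This is precisely the place where the Klingenberg--Takens linear-algebra computation is imported and where the two extra derivatives of smoothness are consumed.
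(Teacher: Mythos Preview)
Your argument for~(b) via the Birkhoff--Lewis fixed point theorem is exactly what the paper does.

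For~(a) the paper takes a different route: rather than extending the osculating-metric scheme to $3$-jets, it simply invokes the result of Carballo and Miranda~\cite[Cor.~5]{CM}, who carry out the required local perturbation of the $3$-jet of the Poincar\'e map in the class of Tonelli Hamiltonians. Since the Finsler geodesic flow is the Hamiltonian flow of the Legendre transform of $L=F^{2}$, their result applies directly, and the two derivatives lost in the Legendre map $y\mapsto g^{y}(y,\cdot)$ (cf.~Equation~\eqref{eq:finsler-metric-coeff}) account for the jump from $r\ge 4$ to $r\ge 6$. This is quicker than your scheme, at the price of importing an external black box.

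Your approach is natural, but the claim that the Klingenberg--Takens computation ``transfers verbatim'' hides the one delicate point. The osculating-metric reduction is clean for the bumpy theorem because the \emph{linearized} Poincar\'e map is governed by the Jacobi equation along $c$, which coincides for $F$ and $g^{V}$. The $3$-jet of the Poincar\'e map, however, depends on the geodesic spray in a full neighbourhood of $c$ in $TM$, where the Finsler flow and the flow of $g^{V}$ no longer agree (the Cartan tensor of $F$ enters off the zero section of the normal bundle). So knowing that your family $F_{s}$ induces the Klingenberg--Takens variation $g^{V}_{s}=g^{V}+s\,\delta g$ does not by itself determine the first variation of the \emph{Finsler} $3$-jet; one must compute the variation of the Finsler spray directly and check surjectivity there. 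This is feasible---and is essentially what \cite{CM} packages on the Hamiltonian side---but it is not a verbatim transfer of the Riemannian argument.
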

% % % % % % % % % % % % % % % % % % % % % %
The necessary local pertubation argument
in (a)
in the larger class of \emph{Tonelli Hamiltonians}
was shown by Carballo and Miranda~\cite[Cor.5]{CM}.
The order $6$ instead of $4$ as in the
Riemannian case comes from taking the
Hamiltonian $H: T^*M \longrightarrow \R$ corresponding to
the Lagrangian $L=F^2:TM \longrightarrow
\R$ by applying the Legendre transformation:
$y \in T_xM \longrightarrow g^y(y,.)\in
T^*M,$ cf.
Equation~\eqref{eq:finsler-metric-coeff}.
Part (b) is a direct consequence of the
\emph{Birkhoff-Lewis fixed point theorem,}
cf.~\cite[Thm.3.3.10]{Kl} and \cite{Mo}.
% % % % % % % % % % % % % % % % % % % % % % % % %

% % % % % % % % % % % % % % % % % % % % % % % % % % % % % % % % % % % % % %
In~\cite[p.141]{Zi} Ziller sketches how Pugh's 
\emph{closing lemma} \cite{Pu}
implies that for a
$C^2$-generic Finsler metric on a compact manifold
the initial vectors to closed geodesics are dense
in the unit tangent bundle,
cf. also~\cite{PR} for a 
Hamiltonian closing lemma.

Hence for these metrics there are in particular
infinitely many geometrically distinct closed geodesics.
%But note that $C^2$-close Finsler metrics from
%a geometric point of view can be quite different,
%for example there is no control for the 
%flag curvature. 
The perturbations used in the closing lemma
do not preserve particular
flows as the geodesic flow of a Riemannian metric
or a magnetic geodesic flow. A $C^1$-closing lemma for
Riemannian metrics was obtained
by Rifford~\cite{Ri}.
% % % % % % % % % % % % % % % % % % % % % % % % % % % % % % % % % % % % % %
% % % % % % % % % % % % % % % % % % % % % % % % % % % % % % % % % % % % % %
\section{The second closed geodesic of a Riemannian or reversible Finsler metric}
\label{sec:riem}

By using Propositions \ref{prop-schwarz} and \ref{prop-bott} and computations of certain homologies in \cite{Schwarz}, Fet proved that a bumpy Riemannian metric
on a simply-connected closed manifold has at least two distinct closed geodesics \cite{Fet}.
For compact manifolds with infinite fundamental group
the analogue of this statement is not yet established.

In this and the next sections we present topological and generic conditions under which the existence of
second closed geodesic on a non-simply-connected manifold can be proved.

We want to give a sufficient condition for the fundamental
group of a compact manifold to ensure the existence of a
second closed geodesic.
On a compact manifold with a non-trivial fundamental group
$\pi_1(M)$
and with a torsion element $a \in \pi_1(M)$ (i.e. $a^m=1$ for some
$m \ge 2$) there are infinitely many closed geodesics for
a Riemannian metric all of
	whose closed geodesics are hyperbolic. This is shown in
~\cite[Thm.A]{BTZ1981}, cf.
Proposition~\ref{pro:btz}.
If we assume that a compact
manifold with non-trivial fundamental group
and without torsion elements
is endowed with an arbitrary Riemannian metric
which carries only one
closed geodesic then its fundamental group
satisfies the assumption of the following algebraic
lemma, cf. \cite[(3.15)]{Ba}:
%%%%%%%%%%%%%%%%%%%%%%%%%%%%%%%%%%%%%%%%%%%%%%%%%%
\begin{lemma}
\label{lem:group-anosov}
Let $G$ be a group with the following property:
There is an element $a \in G$ such that
the subgroup
$\langle a\rangle=\{a^m; m\in \z\}\subset G$
generated by $a$ is isomorphic to the group of
integers, i.e. $\langle a \rangle
\cong \z$ and such that
for all $g \in G$ there is an element $h \in G$ and
$m \in \z$ with $g=h a^m h^{-1}.$

Then the abelianization $
G^{ab}=H_1(G;\z)=G/[G,G]$ is finite and cyclic
unless $G\cong \z.$ Hence the commutator subgroup $[G,G]$
is infinite and has
finite index.
\end{lemma}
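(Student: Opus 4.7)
The plan is to handle the lemma in two quick steps: first show that $G^{ab}$ is cyclic, then show that if this cyclic group is infinite then $G\cong\z$.

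First, I would exploit directly that every $g\in G$ equals $ha^mh^{-1}$ for some $h\in G$ and $m\in\z$: since conjugate elements map to the same class in $G^{ab}=G/[G,G]$, the image $\bar g$ equals $\bar a^m$. Hence the image $\bar a$ of $a$ generates all of $G^{ab}$, so $G^{ab}$ is cyclic.

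Next, assume $G^{ab}$ is infinite; being cyclic, it is then isomorphic to $\z$ and $\bar a$ has infinite order. I would take an arbitrary commutator $[x,y]\in[G,G]$: by the hypothesis it equals $ka^pk^{-1}$ for some $k\in G$ and $p\in\z$, so its image in $G^{ab}$ is $\bar a^p$; but commutators vanish in the abelianization, so $\bar a^p=1$, and since $\bar a$ has infinite order this forces $p=0$. Hence $[x,y]=e$, so $G$ is abelian. In an abelian group ``conjugate to a power of $a$'' is the same as ``equal to a power of $a$'', so the hypothesis yields $G=\langle a\rangle\cong\z$.

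Contrapositively, if $G\not\cong\z$ then $G^{ab}$ is the finite cyclic group produced above, so $[G,G]$ has finite index in $G$; since $\langle a\rangle\cong\z$ is already infinite the ambient group $G$ is infinite, and therefore $[G,G]$ is infinite as well. I do not foresee a genuine obstacle: the lemma is elementary group theory, and the only mildly clever point is that in the infinite case the infinite order of $\bar a$ pins down the exponent $p$ obtained from writing a commutator as a conjugate of a power of $a$.
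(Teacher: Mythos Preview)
Your proof is correct and takes a genuinely different route from the paper's. You first note that $G^{ab}$ is cyclic (generated by $\bar a$), then argue contrapositively: if $G^{ab}\cong\z$, the infinite order of $\bar a$ forces the exponent $p$ in any commutator $[x,y]=ka^pk^{-1}$ to vanish, so every commutator is trivial and $G=\langle a\rangle$. The paper instead assumes $G\not\cong\z$, produces an element $b=hah^{-1}\notin\langle a\rangle$, writes the resulting nontrivial commutator $ba^{-1}=[h,a]$ as $fa^kf^{-1}$ with $k\neq 0$, and then manipulates to obtain $a^k=[a^k,f][h,a]\in[G,G]$; hence $|G^{ab}|$ divides $k$. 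Your argument is cleaner and more conceptual, while the paper's is more constructive in that it yields an explicit bound on the order of $G^{ab}$ coming from a single specific commutator.
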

%%%%%%%%%%%%%%%%%%%%%%%%%%%%%%%%%%%%%%%%%%%%%%%%%%%%%%%%%%%%%%%%%
\begin{proof}
If $G \not\cong \z$ then there is $\widetilde{b} \in G$ and
$h \in G, m\in \z$ with $\widetilde{b}=h a^m h^{-1} \not\in \langle a \rangle .$
It follows that $b:=hah^{-1} \not\in \langle a
\rangle.$
Then
$$ 1 \not= b a^{-1}=h a h^{-1} a^{-1}=[h,a].$$
On the other hand there is an element $f \in G, k \in \z, k\not=0$ with
$$ 1 \not= b a^{-1}=[h,a]=f a^k f^{-1}.$$
Then
$$ [h,a]a^{-k}=[f,a^k]$$
which implies
$$ a^k =[a^k,f][h,a] \in [G,G].$$
Hence the normal subgroup generated by $a^k$ is a subgroup
of the commutator subgroup $[G,G]$ and
the quotient group $H_1(G;\z)=G/[G,G]$ is a finite
cyclic group whose order divides $k.$
\end{proof}
%%%%%%%%%%%%%%%%%%%%%%%%%%%%%%%%%%%%%%%%%%%%%%%%%%%%%%%%%%%%%%%
%%%%%%%%%%%%%%%%%%%%%%%%%%%%%%%%%%%%%%%%%%%%%%%%%%%%%
\begin{proof}[Proof of Theorem~\ref{thm:riem}
	in the Introduction]
Assume there is a prime closed geodesic
$c$ such that all closed geodesics are geometrically
equivalent to $c.$
Then there is a nontrivial homotopy class
$a \in \pi_1(M)$ such that $c$ lies in the free homotopy
class generated by $a$ and such that the following
holds:
For every $g \in \pi_1(M)$ there exists $h\in \pi_1(M),
m \in \z$ with $g=h a^m h^{-1}.$
By assumption the subgroup
$\langle a \rangle$ is isomorphic to $\z.$
If $\pi_1(M)\cong \z$
then there are infinitly many geometrically distinct closed geodesics by a result due to
Bangert-Hingston,
cf.~\cite{BH}.
Hence the fundamental group satisfies the assumptions of
Lemma~\ref{lem:group-anosov}. Therefore the abelianization
$H_1(M;\z)$ of the fundamental group is either trivial
(then the fundamental group is perfect) or it is
isomorphic to $\z_m, m\ge 2.$
\end{proof}
%%%%%%%%%%%%%%%%%%%%%%%%%%%%%%%%%%%%%%%%%%%%%%%%%%%%%
%%%%%%%%%%%%%%%%%%%%%%%%%%%%%%%%%%%%%%%%%%%%%%%%%%%%
\begin{remark}
	\label{rem:kourovka}	
\rm
Problem 8.8 in the Kourovka notebook~\cite{Kou}
posed by Anosov asks whether a
finitely generated group satisfying
the assumption of Lemma~\ref{lem:group-anosov}
besides the integers
exists. Guba~\cite{Gu} constructed such a group, but this group is not
finitely presented and hence it is not the fundamental group
of a compact manifold,
cf. \cite[p.206]{T1992}
and \cite[Rem.(3) following Thm.B.]{BTZ1981}.

We notice if such a group exists and appears as the fundamental group of a closed $K(\pi,1)$ manifold,
then
we do not see how to prove the existence
of at least two closed geodesics on such a manifold
with the methods used in this text.
\end{remark}
%%%%%%%%%%%%%%%%%%%%%%%%%%%%%%%%%%%%%%%%%%%%%%%%%%%%%%%%%%%%

\section{The second closed geodesic
of a non-reversible Finsler metrics}
\label{sec:nonrev}

In this section we consider \emph{non-reversible
Finsler metrics.} In particular two closed geodesics
$c_1,c_2:\sone\longrightarrow M$ are {\em geometrically
equivalent} if their images coincide, i.e.
$c_1(\sone )=c_2(\sone)$ and if their orientations coincide.

In case of non-reversible Finsler metrics a result by
Ballmann, Thorbergsson and Ziller gives the following
% % % % % % % % % % % % % % % % % % % % % % % % % % % % % % %
\begin{proposition}
\cite[Thm.A]{BTZ1981}
\label{pro:btz}
Let $M$ be a compact manifold with
non-trivial fundamental group $\pi_1(M).$
Let $a \in \pi_1(M), a \not=1.$  We assume that for some {\em positive}
integers $0<n<m$ the conjugacy classes  $[a^n]=[a^m]$
of the iterates $a^n,a^m$ coincide.
If the manifold carries a non-reversible
Finsler metric all of whose closed geodesics are hyperbolic then
\begin{equation}
	\label{eq:prime-growth}
\liminf_{t\to \infty} \frac{N^a(t)}{t}>0.
\end{equation}
\end{proposition}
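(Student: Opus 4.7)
The plan is to combine the algebraic iteration of the conjugacy relation $[a^n]=[a^m]$ with Morse theory on the free loop space components $\Lambda M[a^j]$, using hyperbolicity to rigidly control the indices of iterates.

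First I would unpack the algebra. Fixing $h\in\pi_1(M)$ with $ha^nh^{-1}=a^m$ and conjugating this relation with itself, an induction on $k$ gives
\begin{equation*}
[a^{n^{k-i}m^i}]=[a^{m^k}]\qquad(k\ge 1,\ 0\le i\le k),
\end{equation*}
so the single free homotopy class $[a^{m^k}]$ absorbs $k+1$ distinct positive powers of $a$, and the centralizer $\mathrm{St}(a^{m^k})\subset\pi_1(M)$ grows with $k$ through the commutation relations generated by the various powers of $h$. Next, let $c$ be a shortest closed curve in $[a]$, a prime closed geodesic of length $L$. Then the iterates $c^{n^{k-i}m^i}$, $i=0,\ldots,k$, all lie in $\Lambda M[a^{m^k}]$ as $k+1$ critical points of the energy with pairwise distinct lengths $n^{k-i}m^i L$. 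Since every closed geodesic is hyperbolic, Bott's iteration formula makes each $c^j$ non-degenerate with Morse index $j\cdot\mathrm{ind}(c)$, and Propositions~\ref{prop-schwarz}--\ref{prop-bott} pin down the parity of the unique non-vanishing type number of each such iterate.

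The core of the argument will be a Morse-theoretic comparison on the $SO(2)$-homotopy quotient $\Lambda M[a^{m^k}]_{SO(2)}$. Using the exact homotopy sequence
\begin{equation*}
\to\pi_2(M)\to\pi_1(\Lambda M[a^{m^k}])\to\mathrm{St}(a^{m^k})\to 1
\end{equation*}
together with the growth of $\mathrm{St}(a^{m^k})$ from the first step, one should extract a quantitatively growing family of non-trivial equivariant homology classes. The rigid index/parity pattern of the iterates of $c$ cannot account for these classes, so each missing class must be realized by an additional critical point; by parity, hyperbolicity, and Proposition~\ref{prop-schwarz}, each such new critical point is a genuinely \emph{new} prime closed geodesic lying in some class $[a^j]$ with $j\le m^k$. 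A careful counting of how many such new primes appear at level $k$, together with the length bound $m^k L$, should yield $N^a(t)\ge C\,t$ for some $C>0$ and all large $t$, which is precisely $\liminf_{t\to\infty}N^a(t)/t>0$.

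The main obstacle will be this last quantitative topological estimate. One must (a) establish a lower bound, growing fast enough in $k$, on the equivariant Betti numbers of $\Lambda M[a^{m^k}]_{SO(2)}$ coming from $\mathrm{St}(a^{m^k})$; (b) give a matching upper bound on what iterates of already-found prime geodesics can contribute, via Bott's formula and the parity constraints; and (c) convert the resulting excess of critical points, spread over $k$, into a \emph{linear} (rather than merely polylogarithmic) lower bound for $N^a(t)$. Balancing (a) and (b) against each other at the correct scale in $k$ is the delicate point and is where the substance of the Ballmann--Thorbergsson--Ziller argument lies.
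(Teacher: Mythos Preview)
The paper does not prove this proposition; it is quoted from \cite{BTZ1981} (Ballmann--Thorbergsson--Ziller) without argument, so there is no in-paper proof to compare against. I can, however, evaluate your sketch on its own terms and against the actual BTZ argument.

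There is a genuine gap in your mechanism. From $h a^n h^{-1}=a^m$ you correctly deduce that the $k+1$ iterates $c^{n^{k-i}m^i}$ all lie in the single component $\Lambda M[a^{m^k}]$, and (since $c$ is a hyperbolic minimum with $\mathrm{ind}(c)=0$) that all of them are non-degenerate local minima there. But the next step fails: the conjugacy $h^{i}a^{n^{k}}h^{-i}=a^{n^{k-i}m^{i}}$ is \emph{not} a commutation relation, and no power of $h$ is forced to lie in $\mathrm{St}(a^{m^k})$. In fact $h a^{m^k} h^{-1}=a^{m^{k+1}/n}$ whenever $n\mid m^k$, which is different from $a^{m^k}$. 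So the asserted growth of the centralizer, and with it your source of ``growing equivariant homology classes'', is unfounded. What the $k+1$ local minima do give you directly is $k$ mountain-pass critical points of positive index in that one component; the equivariant/centralizer detour is neither needed nor available.

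Even granting $k$ index-$\ge 1$ geodesics produced this way, your scaling is wrong for \eqref{eq:prime-growth}. Those $k$ geodesics live at lengths comparable to $m^kL$, so the conclusion is only $N^a(t)\gtrsim \log t$, not $N^a(t)\gtrsim t$. The BTZ argument obtains the linear rate by working across \emph{all} classes $[a^r]$ simultaneously (using $[a^{rn}]=[a^{rm}]$ for every $r$), comparing the minimal geodesics $d_r$ in $[a^r]$ with $\ell(d_r)\le rL$, and showing via the hyperbolicity (hence isolated, strict local-minimum) property that a single prime geodesic can account for only boundedly many of the $d_r$ with $r\le N$. That counting over $r\in\{1,\dots,N\}$ is what converts into $N^a(NL)\ge cN$. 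Your proposal concentrates all the work in the single component at level $m^k$ and therefore cannot reach the linear bound; to repair it you would have to replace the centralizer idea by a direct local-minimum/minimax count and change the indexing from $k\mapsto m^k$ to a linear sweep in $r$.
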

% % % % % % % % % % % % % % % % % % % % % % % % % % % % % % % %
Here $N^a(t)$ is the number of geometrically
distinct closed geodesics of length $\le t$ which
are freely homotopic to a power $a^k, k\ge 1.$
The proof of the Birkhoff-Lewis fixed point
theorem~\cite{Mo} also implies that for
a closed geodesic $c$ of twist type freely
homotopic to $a \in \pi_1(M)$ there are
infinitely many closed geodesics
freely homotopic to a power
$a^k$ and
the function $N^a(t)$ satisfies
Equation~\eqref{eq:prime-growth}.
% % % % % % % % % % % % % % % % % % % % % % % % % % % % % %
Using this Proposition we give the following
%%%%%%%%%%%%%%%%%%%%%%%%%%%%%%%%%%%%%%%%%%%%%%%%%%%%%%%%%%%%%%%
%%%%%%%%%%%%%%%%%%%%%%%%%%%%%%%%%%%%%%%%%%%%%%%%%%%%%%%%%%%%%%%%
\begin{proof}[Proof of Theorem~\ref{thm:nonrev} in the Introduction]
Since the metric is
non-re\-ver\-sible, we say that two closed geodesics are geometrically distinct if they are
not positive iterates $c^k$ and $c^l$, $k,l>0$, of a closed curve $c$.

If condition a) holds then the minimal closed curves in the homology classes $[a]$ and $[a^{-1}]$,
where $[a]$ is of infinite order in $H_1(M)$, are definitely geometrically distinct.

We are left to consider the case when
condition b) holds.
We assume that there is only one prime closed geodesic
$c:\sone \longrightarrow M$.
Choose $h \in \pi_1(M)$ such that $c$ is freely homotopic to $h$: $c \in \Lambda M [h]$.
If $h$ is a torsion element in $\pi_1$, then Proposition \ref{pro:btz} implies that there are infinitely many
geometrically distinct closed geodesics. Hence we assume that
\begin{equation}
\label{eq:torsion}
h^m\not=1 \ \ \ \mbox{for all $m=1,2,\dots$}.
\end{equation}
Since there is only one closed geodesic there
exists a $k \ge 1$ such that the closed geodesic
$c^k$ is up to parametrization the unique shortest
closed curve in the free homotopy class $[h^{-1}]$.
Hence
there exists
$a \in \pi_1(M)$ such that for some $k \ge 1:$
\begin{equation}
\label{eq:aminus}
h^{-1}=a h^k a^{-1}.
\end{equation}
Then we conclude
\begin{eqnarray*}
h&=&(h^{-1})^{-1}= a h^{-k} a^{-1}= a^2 h^{(k^2)} a^{-2}.
\end{eqnarray*}
If $k \ge 2$ then we obtain from Proposition~\ref{pro:btz}
that there are infinitely many closed geodesics.

Hence we can assume $k=1.$
There are two commuting elements $h,a^2 \in \pi_1(M)$ and we
can assume that both are of infinite order.
If they do not lie in some cyclic subgroup, then by Proposition \ref{oldsmj}, there
exist two geometrically
distinct closed geodesics.
Therefore we are left to assume that $h$ and $a^2$ belong to some cyclic subgroup.
We notice that $h$ is a prime element, otherwise $h = g^m$ for some $g \in \pi_1$ and $m >1$
and the homotopy class $[h]$ contains an $m$-th iterate of the minimal closed geodesic $c^\prime$  in $[g]$.
In this case either there are at least two closed geodesics $c$ and $c^\prime$, or
$c^\prime = c^l, l>0$, and $c$ is conjugate to $c^{lm}$ which implies, by Proposition~\ref{pro:btz},
the existence of infinitely many closed geodesics.

Therefore $a^2 =h^q$ for some $q>0$. By  (\ref{eq:aminus}), for $k=1$ we have
$$
h^{-q} = a h^q a^{-1} = a a^2 a^{-1} = a^2 = h^q,
$$
which implies that
$$
h^{2q}=1,
$$
which contradicts Equation~\eqref{eq:torsion}.
\end{proof}
%%%%%%%%%%%%%%%%%%%%%%%%%%%%%%%%%%%%%%%%%%%%%%%%%%%
%%%%%%%%%%%%%%%%%%%%%%%%%%%%%%%%%%%%%%%%%%%%%%%%%%%%%%%%%%%%%
%%%%%%%%%%%%%%%%%%%%%%%%%%%%%%%%%%%%%%%%%%%%%%%%%%%%%%%%%%%%%

\end{document}